\newcommand{\Authornote}{\renewcommand{\thefootnote}{\fnsymbol{footnote}}}
\newcommand{\authornote}{\Authornote\footnote}
\theoremstyle{plain}
\newtheorem{lemma}{Lemma}[section]
\theoremstyle{definition}
\newtheorem{example}{Example}[section]
\theoremstyle{remark}
\newtheorem{remark}{Remark}[section]
\newcommand{\reflmm}[1]{Lemma~\ref{#1}}
\newcommand{\refex}[1]{Example~\ref{#1}}
\newcommand{\refalg}[1]{Algorithm~\ref{#1}}
\newcommand{\reffig}[1]{Figure~\ref{#1}}
\newcommand{\finbox}{\nolinebreak\hfill{\small $\blacksquare$}}
\newcounter{alnum}
\newenvironment{Problem}{\begin{array}.{*{20}{l}}\}}{\end{array}}
\newcommand{\prox}{\mathop{\boldsymbol{\mathsf{prox}}}\nolimits}
\newcommand{\argmin}{\operatornamewithlimits{\mathrm{arg\,min}}}
\newcommand{\argmax}{\operatornamewithlimits{\mathrm{arg\,max}}}
\renewcommand{\Re}{\ensuremath{\mathbb{R}}}
\newcommand{\overRe}{\ensuremath{\mathbb{R}\cup\{+\infty\}}}
\newcommand{\bi}[1]{\ensuremath{\boldsymbol{#1}}}
\newcommand{\rr}[1]{\ensuremath{\mathrm{#1}}}
\newcommand{\bs}[1]{\ensuremath{\boldsymbol{\mathsf{#1}}}}
\newcommand{\pdif}[2]{\frac{\partial #1}{\partial #2}}
\newcommand{\SC}{\ensuremath{\mathcal{S}}}
\begin{document}

\begin{center}
  {\Large\bfseries\sffamily%
  A note on a family of proximal gradient methods  }\\
  \medskip
  {\Large\bfseries\sffamily%
  for quasi-static incremental problems in elastoplastic analysis 
  }%
  \par%
  \bigskip%
  {
  Yoshihiro Kanno~\authornote[2]{%
    Mathematics and Informatics Center, 
    The University of Tokyo, 
    Hongo 7-3-1, Tokyo 113-8656, Japan.
    E-mail: \texttt{kanno@mist.i.u-tokyo.ac.jp}. 
    }
  }
\end{center}

\begin{abstract}
  Accelerated proximal gradient methods have recently been developed 
  for solving quasi-static incremental problems of elastoplastic analysis 
  with some different yield criteria. 
  It has been demonstrated through numerical experiments that these 
  methods can outperform conventional optimization-based approaches in 
  computational plasticity. 
  However, in literature these algorithms are described individually for 
  specific yield criteria, and hence there exists no guide for 
  application of the algorithms to other yield criteria. 
  This short paper presents a general form of algorithm design, 
  independent of specific forms of yield criteria, that unifies the 
  existing proximal gradient methods. 
  Clear interpretation is also given to each step of the presented 
  general algorithm so that each update rule is linked to the underlying 
  physical laws in terms of mechanical quantities. 
\end{abstract}

\begin{quote}
  \textbf{Keywords}
  \par
  Elastoplastic analysis; 
  incremental problem; 
  nonsmooth convex optimization; 
  first-order optimization method; 
  proximal gradient method. 
\end{quote}

\section{Introduction}

In these 15 years, it has become a trend to apply constrained convex 
optimization approaches to diverse problems in computational plasticity. 
Particularly, {\em second-order cone programming\/} (SOCP) and 
{\em semidefinite programming\/} (SDP) have drawn considerable attention; 
see \cite{BMP05,YK12,Mak06,MM06,MM07,KLS07,KL12} for SOCP approaches, 
\cite{Bis07,BP07,GHdB14,YK16,KLS08,MM08,Mak10} for SDP approaches, and 
\cite{SA18} for recent survey on numerical methods in plasticity. 
Most of these approaches make use of interior-point methods; 
particularly, it is known that primal-dual interior-point methods can 
solve SOCP and SDP problems in polynomial time, and in practice 
require a reasonably small number of iterations \cite{AL12}. 

Recently, {\em accelerated proximal gradient methods\/} 
have been developed for 
solving quasi-static incremental problems in elastoplastic analysis of 
trusses \cite{Kan16} and continua with the von Mises \cite{SK18} and 
Tresca \cite{SK20} yield criteria. 
In contrast to SOCP and SDP approaches, these methods solve 
unconstrained nonsmooth convex optimization formulations of incremental 
problems. 
Through numerical experiments \cite{SK18,SK20}, it has been 
demonstrated that the accelerated proximal gradient methods outperform 
SOCP and SDP approaches using a standard implementation of a primal-dual 
interior-point method. 

The proximal gradient method is a {\em first-order optimization method\/}, 
which uses function values and gradients to update an incumbent solution 
at each iteration; in other words, first-order optimization methods do not 
use the Hessian information of functions. 
In contrast, the interior-point method is a {\em second-order optimization 
method\/}, which makes use of the Hessian information. 
It is usual that first-order methods need only very small computational 
cost per iteration but show slow convergence, compared with second-order 
methods.  
Recently, {\em accelerated\/} versions of first-order methods have been 
extensively studied especially for solving large-scale convex 
optimization problems \cite{GOSB14,CP11,BT09,LS13,OdC15}. 
Such accelerated first-order methods originate \citet{Nes83,Nes04}. 
These methods show locally fast convergence, while computational cost 
per iteration is still very small. 
Also, most of them are easy to implement. 
Particularly, the accelerated proximal gradient method \cite{PB14,BT09} 
has many applications in data science, including 
regularized least-squares problems \cite{DDDm04,TY10}, 
signal and image processing \cite{BT09,CW05}, 
and binary classification \cite{ITT17}. 
This success of the accelerated proximal gradient method in data science 
supports that it can also be efficient especially for large-scale 
problems in computational plasticity, compared with conventional 
second-order optimization methods. 

The idea behind our use of accelerated first-order optimization methods 
for equilibrium analysis of structures can be understood as follows. 
For simplicity, consider static equilibrium analysis of an elastic 
structure. 
Let $\bi{u} \in \Re^{d}$ denote the nodal displacement vector, where $d$ 
is the number of degrees of freedom. 
We use $\pi(\bi{u})$ to denote the elastic energy stored in the 
structure. 
For a specified static external load vector $\bi{q} \in \Re^{d}$, 
the total potential energy is given as 
$\pi(\bi{u}) - \bi{q} \cdot \bi{u}$. 
The equilibrium state is characterized as a stationary point of this 
total potential energy function. 
Application of the steepest descent method (which is a typical 
first-order method) to the minimization problem of 
$\pi(\bi{u}) - \bi{q} \cdot \bi{u}$ results in the iteration 
\begin{align}
  \bi{u}^{(k+1)} 
  := \bi{u}^{(k)} 
  - \alpha (\nabla \pi(\bi{u}^{(k)}) - \bi{q}) , 
  \label{eq:elastic.steepest}
\end{align}
where $\alpha > 0$ is the step length. 
Here, $\nabla \pi(\bi{u}^{(k)}) - \bi{q}$ on the right side is the 
unbalanced nodal force vector (i.e., the residual of the force-balance 
equation) at $\bi{u}^{(k)}$. 
Thus, the computation required for each iteration in 
\eqref{eq:elastic.steepest} is very cheap, compared with an iteration of 
a second-order method, e.g., the Newton--Raphson method 
(which needs to solve a system of linear equations). 
However, it is well known that the steepest descent method spends a 
large number of iterations before convergence: The sequence of objective 
values generated by the steepest descent method converges to the optimal 
value at a linear rate \cite[Theorem~3.4]{NW06}. 
To improve this slow convergence, we may apply Nesterov's acceleration 
to \eqref{eq:elastic.steepest}, which yields the iteration 
\begin{align*}
  \bi{u}^{(k+1)} 
  &:= \bi{v}^{(k)} 
  - \alpha (\nabla \pi(\bi{v}^{(k)}) - \bi{q})  , \\
  \bi{v}^{(k+1)} 
  &:= \bi{u}^{(k+1)}  + \omega_{k} (\bi{u}^{(k+1)} - \bi{u}^{(k)}) , 
\end{align*}
where $\omega_{k} > 0$ is an appropriately chosen 
parameter \cite{Nes83,Nes04}. 
We can see that the computation required for each iteration is still 
very cheap. 
In contrast, the convergence rate is drastically improved: Under several 
assumptions such as strong convexity of $\pi$, local quadratic 
convergence is guaranteed \cite{Nes83,Nes04}. 
In practice, we also incorporate the adaptive restart of 
acceleration \cite{OdC15}, to achieve monotone decrease of 
the objective value. 
Indeed, for elastic problems with material nonlinearity, the numerical 
experiments in \cite{FK19} demonstrate that the accelerated steepest 
descent method outperforms conventional second-order optimization 
methods, especially when the size of a problem instance is large. 

The total potential energy for an elastoplastic incremental problem is 
nonsmooth in general, due to nonsmoothness of the plastic dissipation 
function. 
Therefore, unlike an elastic problem considered above, application of 
the (accelerated) steepest descent method is inadequate. 
Instead, as shown in \cite{Kan16,SK18,SK20}, 
the (accelerated) proximal gradient method is well suited for 
elastoplastic incremental problems. 
In \cite{Kan16,SK18,SK20}, although the concrete steps of the algorithms 
are presented, it is not explained how these steps correspond to the 
underlying physical laws in terms of quantities in mechanics. 
This short paper presents a clearer understanding of 
this correspondence relation. 
Moreover, in \cite{Kan16,SK18,SK20}, for each specific yield criterion 
an algorithm is described individually, and hence comprehensive vision 
of algorithm design is not presented. 
This paper provides a general scheme for algorithm design, independent 
of specific forms of yield criteria. 
With these two contributions, this paper attempts to provide 
a deeper understanding of (accelerated) proximal gradient methods for 
computational plasticity.


The paper is organized as follows. 
Section~\ref{sec:problem} states the elastoplastic incremental problem 
that we consider in this paper. 
Section~\ref{sec:algorithm} presents a general form of the algorithm 
that unifies existing proximal gradient methods for some specific 
yield criteria. 
Section~\ref{sec:understanding} presents an interpretation of this 
general form to provide a clear insight. 
Finally, some conclusions are drawn in section~\ref{sec:conclusion}.

\section{Elastoplastic incremental problem}
\label{sec:problem}

In this section, we formally state the problem considered in this paper. 
Namely, we consider a quasi-static incremental problem of an 
elastoplastic body, where small deformation is assumed. 
Although the proximal gradient methods in \cite{Kan16,SK18,SK20} 
deal with the strain hardening, in this paper we restrict ourselves to 
perfect plasticity (i.e., a case without the strain hardening) 
for the sake of simplicity of presentation. 

Consider an elastoplastic body discretized according to the conventional 
finite element procedure. 
Suppose that we are interested in quasi-static behavior of the 
body in the time interval $[0,T]$. 
This time interval is subdivided into some intervals. 
For a specific subinterval, denoted by $[t, t+\Delta t]$, we apply the 
backward Euler scheme and attempt to find the equilibrium state at 
$t+\Delta t$. 

Let $d$ and $m$ denote the number of degrees of freedom of the nodal 
displacements and the number of the evaluation points of the Gauss 
quadrature, respectively. 
We use $\Delta\bi{u} \in \Re^{d}$ to denote the incremental nodal 
displacement vector. 
At numerical integration point $l$ $(l=1,\dots,m)$, 
let $\Delta\bi{\varepsilon}_{\rr{e}l} \in \SC^{3}$ and 
$\Delta\bi{\varepsilon}_{\rr{p}l} \in \SC^{3}$ denote the incremental 
elastic and plastic strain tensors, respectively, where 
$\SC^{3}$ denotes the set of second-order symmetric tensors with 
dimension three. 
The compatibility relation between the incremental displacement and the 
incremental strain is given as 
\begin{align}
  & \Delta\bi{\varepsilon}_{\rr{e}l} + \Delta\bi{\varepsilon}_{\rr{p}l} 
  = B_{l} \Delta\bi{u} , 
  \quad l=1,\dots,m, 
  \label{eq:equilibrium.1}
\end{align}
where $B_{l}$ is a linear operator. 

Let $\bi{\sigma}_{l} \in \SC^{3}$ $(l=1,\dots,m)$ and 
$\bi{q} \in \Re^{d}$ denote the stress tensor and 
the external nodal load at time $t+\Delta t$. 
The force-balance equation is written as 
\begin{align}
  \sum_{l=1}^{m} \rho_{l} B_{l}^{*} \bi{\sigma}_{l} = \bi{q} , 
  \label{eq:equilibrium.2}
\end{align}
where $B_{l}^{*}$ is the adjoint operator of $B_{l}$, and 
$\rho_{l}$ $(>0)$ is a constant determined from the weight for the 
numerical integration and the volume of the corresponding finite element. 

Let $\bi{\sigma}_{0l} \in \SC^{3}$ denote the (known) stress at time $t$. 
The constitutive equation is written as 
\begin{align}
  \bi{\sigma}_{l} 
  = \bi{\sigma}_{0l} 
  + \bs{C}_{l} \Delta\bi{\varepsilon}_{\rr{e}l} , 
  \quad l=1,\dots,m, 
  \label{eq:equilibrium.3}
\end{align}
where $\bs{C}_{l}$ is the elasticity tensor. 
Let $Y_{l} \subset \SC^{3}$ denote the admissible set of stress, where 
the boundary of $Y_{l}$ corresponds to the yield surface. 
As usual in plasticity, we assume that $Y_{l}$ is a nonempty convex set. 
Let $\delta_{Y_{l}} : \SC^{3} \to \overRe$ denote the 
{\em indicator function\/} of $Y_{l}$, i.e., 
\begin{align*}
  \delta_{Y_{l}}(\bi{\sigma}_{l}) = 
  \begin{dcases*}
    0 
    & if $\bi{\sigma}_{l} \in Y_{l}$, \\
    +\infty 
    & if $\bi{\sigma}_{l} \not\in Y_{l}$. 
  \end{dcases*}
\end{align*}
The postulate of the maximum plastic work is written as \cite{HR13,Kan11} 
\begin{align}
  \bi{\sigma}_{l} 
  \in \argmax_{\check{\bi{\sigma}}_{l} \in \SC^{3}} \{
  \Delta\bi{\varepsilon}_{\rr{p}l} : \check{\bi{\sigma}}_{l} 
  - \delta_{Y_{l}}(\check{\bi{\sigma}}_{l}) 
  \} , 
  \quad
  l=1,\dots,m, 
  \label{eq:equilibrium.4.1}
\end{align}
where $\Delta\bi{\varepsilon}_{\rr{p}l} : \check{\bi{\sigma}}_{l}$ 
is the scalar product (i.e., the double dot product) 
of $\Delta\bi{\varepsilon}_{\rr{p}l}$ and $\check{\bi{\sigma}}_{l}$. 
Let $\delta_{Y_{l}}^{*} : \SC^{3} \to \overRe$ denote the 
{\em conjugate function\/} of $\delta_{Y_{l}}^{*}$, i.e., 
\begin{align*}
  \delta_{Y_{l}}^{*}(\Delta\bi{\varepsilon}_{\rr{p}l}) 
  &= \sup
  \{ \Delta\bi{\varepsilon}_{\rr{p}l} : \bi{\sigma}_{l} 
  - \delta_{Y_{l}}(\bi{\sigma}_{l}) 
  \mid
  \bi{\sigma}_{l} \in \SC^{3} \} \\
  &= \sup
  \{ \Delta\bi{\varepsilon}_{\rr{p}l} : \bi{\sigma}_{l} 
  \mid
  \bi{\sigma}_{l} \in Y_{l} \} , 
\end{align*}
which is called the {\em dissipation function\/} \cite{HR13}.\footnote{%
In convex analysis, $\delta_{Y_{l}}^{*}$ is known as a {\em support function\/} 
of $Y_{l}$ \cite{Roc70}. } 
It is worth noting that $\delta_{Y_{l}}^{*}$ is a closed proper convex 
function. 
We use 
$\partial\delta_{Y_{l}}^{*}(\Delta\bi{\varepsilon}_{\rr{p}l}) \subseteq \SC^{3}$ 
to denote the {\em subdifferential\/} of $\delta_{Y_{l}}^{*}$ at 
$\Delta\bi{\varepsilon}_{\rr{p}l}$, i.e., 
\begin{align*}
  \partial\delta_{Y_{l}}^{*}(\Delta\bi{\varepsilon}_{\rr{p}l}) 
  = \{ \bi{\sigma}_{l} \in \SC^{3} 
  \mid
  & \delta_{Y_{l}}^{*}(\bi{e}) 
  \ge \delta_{Y_{l}}^{*}(\Delta\bi{\varepsilon}_{\rr{p}l}) \\
  &\quad 
  {}+ \bi{\sigma}_{l} : (\bi{e} - \Delta\bi{\varepsilon}_{\rr{p}l})
  \ (\forall \bi{e} \in \SC^{3})
  \} . 
\end{align*}
As a fundamental result of convex analysis, 
\eqref{eq:equilibrium.4.1} is equivalent to\footnote{%
For a closed proper convex function $f : \Re^{n} \to \overRe$, it is 
known that \cite[Proposition~2.1.12]{Kan11} 
\begin{align*}
  \bi{x} 
  \in \argmax_{\bi{x} \in \Re^{n}} \{
  \langle \bi{s}, \bi{x} \rangle - f(\bi{x}) 
  \}
\end{align*}
is equivalent to 
\begin{align*}
  \bi{x} \in \partial f^{*}(\bi{s}) . 
\end{align*}
 }
\begin{align}
  \bi{\sigma}_{l} 
  \in \partial\delta_{Y_{l}}^{*}(\Delta\bi{\varepsilon}_{\rr{p}l}) , 
  \quad l=1,\dots,m. 
  \label{eq:equilibrium.4}
\end{align}

Accordingly, the incremental problem to be solved is formulated as 
\eqref{eq:equilibrium.1}, \eqref{eq:equilibrium.2}, 
\eqref{eq:equilibrium.3}, and \eqref{eq:equilibrium.4}, 
where $\Delta\bi{\varepsilon}_{\rr{e}l}$, 
$\Delta\bi{\varepsilon}_{\rr{p}l}$, $\bi{\sigma}_{l}$ $(l=1,\dots,m)$, 
and $\Delta\bi{u}$ are unknown variables.

\section{General form of proximal gradient method for elastoplastic incremental problems}
\label{sec:algorithm}

For a specific yield criterion, a proximal gradient method was 
individually proposed in each of \cite{Kan16,SK18,SK20}. 
In this section, we present a unified perspective of these methods by 
providing a general form of a proximal gradient method solving a general 
elastoplastic incremental problem in 
\eqref{eq:equilibrium.1}, \eqref{eq:equilibrium.2}, 
\eqref{eq:equilibrium.3}, and \eqref{eq:equilibrium.4}. 
Also, from a mechanical point of view, clear interpretation is given to 
each step of the iteration. 

We begin by observing that $\Delta\bi{\varepsilon}_{\rr{e}l}$ can be 
eliminated by using \eqref{eq:equilibrium.1}. 
Then the increment of the stored elastic energy associated with the 
Gauss evaluation point $l$ $(l=1,\dots,m)$ can be written as 
\begin{align*}
  w_{l}(\Delta\bi{\varepsilon}_{\rr{p}l},\Delta\bi{u}) 
  &= \frac{1}{2} \rho_{l} 
  \bs{C}_{l} (B_{l} \Delta\bi{u} - \Delta\bi{\varepsilon}_{\rr{p}l}) 
  : (B_{l} \Delta\bi{u} - \Delta\bi{\varepsilon}_{\rr{p}l})  \\
  & \qquad 
  {}+ \rho_{l} \bi{\sigma}_{0l} : 
  (B_{l} \Delta\bi{u} - \Delta\bi{\varepsilon}_{\rr{p}l}) . 
\end{align*}
This is a convex quadratic function, because $\bs{C}_{l}$ is a constant 
positive definite tensor. 
Accordingly, the minimization problem of the total potential energy is 
formulated as follows: 
\begin{align}
  \text{Min.\ }
  \quad
  & \sum_{l=1}^{m} w_{l}(\Delta\bi{\varepsilon}_{\rr{p}l},\Delta\bi{u}) 
  + \sum_{l=1}^{m} \rho_{l} 
  \delta_{Y_{l}}^{*}(\Delta\bi{\varepsilon}_{\rr{p}l})  
  - \bi{q} \cdot \Delta\bi{u} . 
  \label{P.unconstrained.1}
\end{align}
Here, $\bi{q} \cdot \Delta\bi{u}$ is the scalar product 
of $\bi{q}$ and $\Delta\bi{u}$. 
The optimality condition of problem \eqref{P.unconstrained.1} corresponds 
to \eqref{eq:equilibrium.1}, \eqref{eq:equilibrium.2}, 
\eqref{eq:equilibrium.3}, and \eqref{eq:equilibrium.4}; 
see, for more accounts with specific yield 
criteria, \cite{Kan16,Kan11,SK18,YK12}. 

For a closed convex function $f:\SC^{3} \to \overRe$, let 
$\prox_{f} : \SC^{3} \to \SC^{3}$ denote 
the {\em proximal operator\/} of $f$, which is defined by 
\begin{align*}
  \prox_{f}(\bi{\chi})
  = \argmin_{\bi{\zeta} \in \SC^{3}} \Bigl\{
  f(\bi{\zeta}) + \frac{1}{2} \| \bi{\zeta} - \bi{\chi} \|_{\rr{F}}^{2} 
  \Bigr\}  
\end{align*}
for any $\bi{\chi} \in \SC^{3}$, 
where $\| \bi{\zeta} - \bi{\chi} \|_{\rr{F}}$ is the 
Frobenius norm of $\bi{\zeta} - \bi{\chi}$, i.e., 
$\| \bi{\zeta} - \bi{\chi} \|_{\rr{F}}
  = \sqrt{ (\bi{\zeta} - \bi{\chi}) : (\bi{\zeta} - \bi{\chi}) }$. 
A proximal gradient method applied to problem \eqref{P.unconstrained.1} 
consists of the iteration 
\begin{align*}
  \Delta\bi{u}^{(k+1)} 
  &:= \Delta\bi{u}^{(k)} 
  - \alpha \Bigl(
  \sum_{l=1}^{m} \pdif{}{\Delta\bi{u}} 
  w_{l}(\Delta\bi{\varepsilon}_{\rr{p}l}^{(k)},\Delta\bi{u}^{(k)}) 
  - \bi{q} 
  \Bigr) , \\
  \Delta\bi{\varepsilon}_{\rr{p}l}^{(k+1)} 
  &:= \prox_{\alpha \rho_{l} \delta_{Y_{l}}^{*}}\Bigl(
  \Delta\bi{\varepsilon}_{\rr{p}l}^{(k)} 
  - \alpha \pdif{}{\Delta\bi{\varepsilon}_{\rr{p}l}} 
  w_{l}(\Delta\bi{\varepsilon}_{\rr{p}l}^{(k)},\Delta\bi{u}^{(k)}) 
  \Bigr) , \\
  & \qquad\qquad\qquad\qquad\qquad l=1,\dots,m . 
\end{align*}
Here, $\alpha > 0$ is a step length. 
For the guarantee of convergence we let $\alpha \le 1/L$ \cite{BT09,PB14}, 
where $L$ denotes the maximum eigenvalue of the Hessian matrix of 
$\sum_{l=1}^{m} w_{l}(\Delta\bi{\varepsilon}_{\rr{p}l},\Delta\bi{u})$. 
A short calculation shows that the iteration above can be written in 
an explicit manner as \refalg{alg:proximal.plasticity}. 

\begin{algorithm}
  \caption{Proximal gradient method for solving 
  problem \eqref{P.unconstrained.1}. }
  \label{alg:proximal.plasticity}
  \begin{algorithmic}[1]
    \Require 
    $\Delta\bi{u}^{(0)} \in \Re^{d}$, 
    $\Delta\bi{\varepsilon}_{\rr{p}l}^{(0)} \in \SC^{3}$ $(l=1,\dots,m)$, 
    $\alpha \in (0, 1/L]$. 
    Set $\beta_{l} \gets \rho_{l} \alpha$ $(l=1,\dots,m)$. 
    \For{$k=0,1,2,\dots$}
    \State \label{alg:proximal.plasticity.elastic}
    $\Delta\bi{\varepsilon}_{\rr{e}l}^{(k)}
    \gets B_{l} \Delta\bi{u}^{(k)} 
    - \Delta\bi{\varepsilon}_{\rr{p}l}^{(k)}$ 
    $(l=1,\dots,m)$. 
    \State \label{alg:proximal.plasticity.stress}
    $\bi{\sigma}_{l}^{(k)} 
    \gets \bi{\sigma}_{0l} + \bs{C}_{l} : \Delta\bi{\varepsilon}_{\rr{e}l}^{(k)}$ 
    $(l=1,\dots,m)$. 
    \State \label{alg:proximal.plasticity.residual}
    $\displaystyle
    \bi{r}^{(k)} 
    \gets \sum_{l=1}^{m} \rho_{l} B_{l}^{*} \bi{\sigma}_{l}^{(k)} - \bi{q}$. 
    \State \label{alg:proximal.plasticity.displacement}
    $\Delta\bi{u}^{(k+1)} 
    \gets \Delta\bi{u}^{(k)} - \alpha \bi{r}^{(k)}$. 
    \State \label{alg:proximal.plasticity.prox}
    $\displaystyle
    \Delta\bi{\varepsilon}_{\rr{p}l}^{(k+1)} 
    \gets \prox_{\beta_{l} \delta_{Y_{l}}^{*}}
    (\Delta\bi{\varepsilon}_{\rr{p}l}^{(k)} + \beta_{l} \bi{\sigma}_{l}^{(k)})$ 
    $(l=1,\dots,m)$. 
    \State 
    Terminate if 
    $\|\Delta\bi{u}^{(k+1)} - \Delta\bi{u}^{(k)}\|$ and 
    $\| \Delta\bi{\varepsilon}_{\rr{p}l}^{(k+1)} 
    - \Delta\bi{\varepsilon}_{\rr{p}l}^{(k)}\|$ $(l=1,\dots,m)$ 
    are small. 
    \EndFor
  \end{algorithmic}
\end{algorithm}

\begin{figure*}[tbp]
  \centering
  \subfloat[]{
  \label{fig:dissipation_function}
  \includegraphics[scale=0.90]{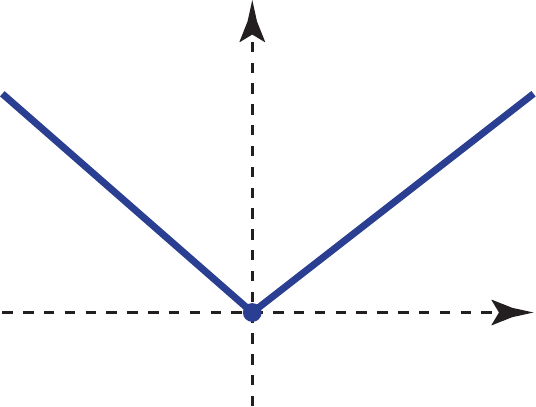}
  \begin{picture}(0,0)
    \put(-150,-50){
    \put(125,64){{\footnotesize $\Delta\varepsilon_{\rr{p}l}$ }}
    \put(63,146){{\footnotesize $z$ }}
    \put(108,136){{\footnotesize $z = \delta_{Y_{l}}^{*}(\Delta\varepsilon_{\rr{p}l})$ }}
    }
  \end{picture}
  }
  \qquad\qquad
  \subfloat[]{
  \label{fig:soft_thresholding}
  \includegraphics[scale=0.90]{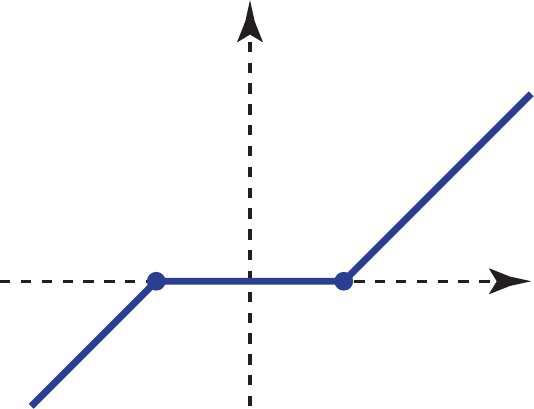}
  \begin{picture}(0,0)
    \put(-150,-50){
    \put(125,72){{\footnotesize $\Delta\varepsilon_{\rr{p}l}$ }}
    \put(63,146){{\footnotesize $z$ }}
    \put(104,138){{\footnotesize $z = \prox_{\beta_{l}\delta_{Y_{l}}^{*}}(\Delta\varepsilon_{\rr{p}l})$ }}
    }
  \end{picture}
  }
  \caption{The dissipation function of \refex{ex:truss} and 
  its proximal operator. }
\end{figure*}

\begin{example}\label{ex:truss}
  As a simple example, consider a truss consisting of $m$ bars. 
  For truss element $l$ $(l=1,\dots,m)$, we use $\sigma_{l}$ and 
  $\Delta\varepsilon_{\rr{p}l}$ to denote the axial stress and the 
  incremental axial plastic strain, respectively. 
  The set of admissible stress is given as 
  \begin{align*}
    Y_{l} = \{
    \sigma_{l} \in \Re
    \mid
    | \sigma_{l} | \le R_{l} 
    \} , 
  \end{align*}
  where $R_{l}$ is the magnitude of the yield stress. 
  Then the dissipation function is 
  \begin{align*}
    \delta_{Y_{l}}^{*}(\Delta\varepsilon_{\rr{p}l}) 
    = R_{l} | \Delta\varepsilon_{\rr{p}l} | , 
  \end{align*}
  which is depicted in \reffig{fig:dissipation_function}. 
  The proximal operator of $\beta_{l}\delta_{Y_{l}}^{*}$, used in 
  line \ref{alg:proximal.plasticity.prox} of 
  \refalg{alg:proximal.plasticity}, is 
  \begin{align*}
    \prox_{\beta_{l}\delta_{Y_{l}}^{*}}(\Delta\varepsilon_{\rr{p}l}) = 
    \begin{dcases*}
      \Delta\varepsilon_{\rr{p}l} +  \beta_{l} R_{l} 
      & if $\Delta\varepsilon_{\rr{p}l} < -  \beta_{l} R_{l}$, \\
      0 
      & if $|\Delta\varepsilon_{\rr{p}l}| \le \beta_{l} R_{l}$, \\
      \Delta\varepsilon_{l} -  \beta_{l} R_{l} 
      & if $\Delta\varepsilon_{\rr{p}l} >  \beta_{l} R_{l}$, \\
    \end{dcases*}
  \end{align*}
  which is depicted in \reffig{fig:soft_thresholding}; 
  see \cite{Kan16} for details. 
  \finbox
\end{example}

Each iteration of \refalg{alg:proximal.plasticity} consists of the 
following procedures. 
In line~\ref{alg:proximal.plasticity.elastic}, according to the 
compatibility relations, we update the incremental elastic strains, 
by using the incumbent incremental displacement and 
the incumbent incremental plastic strains. 
In line~\ref{alg:proximal.plasticity.stress}, we update the stress 
tensors according to the constitutive equations. 
In line~\ref{alg:proximal.plasticity.residual}, we compute the 
unbalanced nodal force vector, $\bi{r}^{(k)}$, according to the 
force-balance equation. 
Line~\ref{alg:proximal.plasticity.displacement} updates the incremental 
displacement vector, by adding $-\alpha \bi{r}^{(k)}$ to the incumbent 
solution, $\Delta\bi{u}^{(k)}$. 
This update rule is analogous to the steepest descent method applied to 
an elastic problem \cite{FK19}; see \eqref{eq:elastic.steepest}. 
Line~\ref{alg:proximal.plasticity.prox} updates the incremental plastic 
strains, where the proximal operator of the dissipation function 
(scaled by $\beta_{l}$) is used. 
This step is further discussed in section~\ref{sec:understanding}. 

The computations of \refalg{alg:proximal.plasticity}, except for the one 
in line~\ref{alg:proximal.plasticity.prox}, consist of additions and 
multiplications, which are computationally very cheap; it is worth 
noting that $B_{l}$ $(l=1,\dots,m)$ is usually sparse. 
For the computation in line~\ref{alg:proximal.plasticity.prox}, 
explicit formulae have been given for the truss \cite{Kan16}, 
von Mises \cite{SK18}, and Tresca \cite{SK20} yield criteria. 
Therefore, for these yield criteria, the computation in 
line~\ref{alg:proximal.plasticity.prox} is also cheap. 
Thus, to extend this algorithm to another yield criterion, it is 
crucial to develop an efficient computational manner for 
line~\ref{alg:proximal.plasticity.prox}. 

It is worth noting that, in practice, we incorporate the acceleration 
scheme \cite{BT09} and its restart scheme \cite{OdC15} into 
\refalg{alg:proximal.plasticity}, to reduce the number of iterations 
required before convergence; see \cite{Kan16,SK18,SK20}. 
Also, every step of \refalg{alg:proximal.plasticity} is highly 
parallelizable.
Namely, the computations in lines \ref{alg:proximal.plasticity.elastic}, 
\ref{alg:proximal.plasticity.stress}, and 
\ref{alg:proximal.plasticity.prox} are carried out independently for 
each numerical integration point, and the vector additions in 
lines \ref{alg:proximal.plasticity.residual} and 
\ref{alg:proximal.plasticity.displacement} can be performed independently 
for each row.

\section{Understanding as fixed-point iteration}
\label{sec:understanding}

This section provides an interpretation of 
\refalg{alg:proximal.plasticity} from a perspective of a fixed-point 
iteration. 
A key is the following property of the proximal operator. 
\begin{lemma}\label{lmm:resolvent.interpretation}
  Let $f : \SC^{3} \to \overRe$ be a closed proper convex function, 
  and $\gamma > 0$. 
  For $\bi{\chi}$, $\bi{\zeta} \in \SC^{3}$, we have 
  $\bi{\zeta} = \prox_{\gamma f}(\bi{\chi})$ if and only if 
  $\bi{\chi} = \bi{\zeta} + \gamma \partial f(\bi{\zeta})$. 
\end{lemma}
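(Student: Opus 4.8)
The plan is to recognize the asserted equivalence as the standard identification of the proximal operator with the resolvent of the subdifferential, and to derive it from Fermat's optimality rule for the strongly convex problem that defines $\prox_{\gamma f}$. Throughout I regard $\SC^{3}$ as a finite-dimensional Euclidean space with inner product $\bi{\xi} : \bi{\eta}$ and induced norm $\|\cdot\|_{\rr{F}}$, so that ordinary finite-dimensional convex analysis applies. Fix $\bi{\chi} \in \SC^{3}$ and set
\begin{align*}
  g(\bi{\xi}) := \gamma f(\bi{\xi}) + \frac{1}{2}\|\bi{\xi} - \bi{\chi}\|_{\rr{F}}^{2} ,
  \qquad \bi{\xi} \in \SC^{3} .
\end{align*}
Since $f$ is closed proper convex and $\gamma > 0$, the function $g$ is closed proper and strongly convex, hence has a unique minimizer, which by definition is $\prox_{\gamma f}(\bi{\chi})$. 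Therefore $\bi{\zeta} = \prox_{\gamma f}(\bi{\chi})$ holds if and only if $\bi{\zeta}$ minimizes $g$ over $\SC^{3}$, and it remains only to show that the latter is equivalent to $\bi{\chi} - \bi{\zeta} \in \gamma \partial f(\bi{\zeta})$, which is what the stated identity $\bi{\chi} = \bi{\zeta} + \gamma \partial f(\bi{\zeta})$ abbreviates.

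For this equivalence I would invoke two classical facts. Because $g$ is convex, $\bi{\zeta}$ is a global minimizer if and only if $\bi{0} \in \partial g(\bi{\zeta})$; and because the quadratic term $\bi{\xi} \mapsto \frac{1}{2}\|\bi{\xi}-\bi{\chi}\|_{\rr{F}}^{2}$ is finite and differentiable everywhere, the Moreau--Rockafellar sum rule gives $\partial g(\bi{\xi}) = \gamma \partial f(\bi{\xi}) + (\bi{\xi} - \bi{\chi})$ for every $\bi{\xi}$. Combining these, $\bi{\zeta}$ minimizes $g$ if and only if $\bi{0} \in \gamma \partial f(\bi{\zeta}) + (\bi{\zeta} - \bi{\chi})$, i.e., $\bi{\chi} - \bi{\zeta} \in \gamma \partial f(\bi{\zeta})$, which is exactly the asserted characterization. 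Alternatively, one can bypass the sum rule and argue by completing the square: from the identity $\frac{1}{2}\|\bi{\xi}-\bi{\chi}\|_{\rr{F}}^{2} - \frac{1}{2}\|\bi{\zeta}-\bi{\chi}\|_{\rr{F}}^{2} = (\bi{\zeta}-\bi{\chi}) : (\bi{\xi}-\bi{\zeta}) + \frac{1}{2}\|\bi{\xi}-\bi{\zeta}\|_{\rr{F}}^{2}$ one obtains $g(\bi{\xi}) - g(\bi{\zeta}) = \gamma\bigl(f(\bi{\xi}) - f(\bi{\zeta})\bigr) + (\bi{\zeta}-\bi{\chi}) : (\bi{\xi}-\bi{\zeta}) + \frac{1}{2}\|\bi{\xi}-\bi{\zeta}\|_{\rr{F}}^{2}$; the inequality $g(\bi{\xi}) \ge g(\bi{\zeta})$ for all $\bi{\xi}$ then translates, after dividing by $\gamma$ and passing to the limit along $\bi{\xi} = \bi{\zeta} + t(\bi{\eta}-\bi{\zeta})$ as $t \downarrow 0$ (using convexity of $f$), precisely into the subgradient inequality $f(\bi{\eta}) \ge f(\bi{\zeta}) + \frac{1}{\gamma}(\bi{\chi}-\bi{\zeta}) : (\bi{\eta}-\bi{\zeta})$ for all $\bi{\eta} \in \SC^{3}$, and the converse implication is obtained by reading the same computation backwards (the membership $\bi{\chi} - \bi{\zeta} \in \gamma \partial f(\bi{\zeta})$ forcing $f(\bi{\zeta})$ finite, so that $g(\bi{\zeta})$ is well defined).

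The only genuinely delicate point is the step that converts global minimality of $g$ into a subgradient statement for $f$ — equivalently, the legitimacy of the subdifferential sum rule, or, in the self-contained version, the limiting argument in the ``only if'' direction. This is entirely standard because the quadratic perturbation is everywhere finite and continuous, so no constraint-qualification issue arises; I would simply cite a convex-analysis reference such as \cite{Roc70} or \cite[Chap.~2]{Kan11}. Everything else — existence and uniqueness of the proximal point from strong convexity, and Fermat's rule for convex functions — is immediate.
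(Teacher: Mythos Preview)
Your argument is correct and is exactly the standard derivation of the resolvent characterization of the proximal operator; nothing is missing. The paper itself does not give a proof at all --- it simply writes ``See \cite[section~3.2]{PB14}'' --- so your write-up is strictly more detailed than what appears in the paper. The route you take (Fermat's rule plus the Moreau--Rockafellar sum rule, with a self-contained alternative via the quadratic expansion) is precisely the argument one finds spelled out in the cited reference, so in substance your approach and the paper's coincide; you have just unpacked the citation.
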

\begin{proof}
  See \cite[section~3.2]{PB14}. 
\end{proof}

In accordance with lines \ref{alg:proximal.plasticity.elastic} and 
\ref{alg:proximal.plasticity.stress} of \refalg{alg:proximal.plasticity}, 
we write 
\begin{align*}
  \bi{\sigma}_{l}(\Delta\bi{\varepsilon}_{\rr{p}l},\Delta\bi{u}) 
  = \bi{\sigma}_{0l} 
  + \bs{C}_{l} (B_{l} \Delta\bi{u} - \Delta\bi{\varepsilon}_{\rr{p}l}), 
  \quad 
  l=1,\dots,m 
\end{align*}
for notational simplicity, where 
$\bi{\sigma}_{l}(\Delta\bi{\varepsilon}_{\rr{p}l},\Delta\bi{u})$ is the 
incumbent stress determined from $\Delta\bi{\varepsilon}_{\rr{p}l}$ and 
$\Delta\bi{u}$. 
It follows from \eqref{eq:equilibrium.2} that the unbalanced nodal force 
vector (i.e., the residual of the force-balance equation) can be written as 
\begin{align*}
  & \bi{r}(\Delta\bi{u},\Delta\bi{\varepsilon}_{\rr{p}1},
  \dots,\Delta\bi{\varepsilon}_{\rr{p}m}) 
  = \sum_{l=1}^{m} \rho_{l} B_{l}^{*} 
  \bi{\sigma}_{l}(\Delta\bi{\varepsilon}_{\rr{p}l},\Delta\bi{u}) 
  - \bi{q} . 
\end{align*}
Then the system of \eqref{eq:equilibrium.1}, 
\eqref{eq:equilibrium.2}, \eqref{eq:equilibrium.3}, and 
\eqref{eq:equilibrium.4} is equivalently rewritten as 
\begin{align}
  \bi{r}(\Delta\bi{u},\Delta\bi{\varepsilon}_{\rr{p}1},
  \dots,\Delta\bi{\varepsilon}_{\rr{p}m}) 
  &= \bi{0} , 
  \label{eq:fixed.1} \\
  \partial\delta_{Y_{l}}^{*}(\Delta\bi{\varepsilon}_{\rr{p}l}) 
  - \bi{\sigma}_{l}(\Delta\bi{\varepsilon}_{\rr{p}l},\Delta\bi{u}) 
  &\ni \bi{o} , 
  \quad l=1,\dots,m. 
  \label{eq:fixed.2}
\end{align}
We easily see that, 
for any $\alpha > 0$ and $\beta_{l} > 0$ $(l=1,\dots,m)$, 
\eqref{eq:fixed.1} and \eqref{eq:fixed.2} are satisfied if and only if 
\begin{align}
  \Delta\bi{u} 
  & = \Delta\bi{u} - \alpha 
  \bi{r}(\Delta\bi{u},\Delta\bi{\varepsilon}_{\rr{p}1},
  \dots,\Delta\bi{\varepsilon}_{\rr{p}m}) , 
  \label{eq:fixed.point.1} \\
  \Delta\bi{\varepsilon}_{\rr{p}l} 
  & \in \Delta\bi{\varepsilon}_{\rr{p}l} 
  - \beta_{l}
  (\partial\delta_{Y_{l}}^{*}(\Delta\bi{\varepsilon}_{\rr{p}l}) 
  - \bi{\sigma}_{l}(\Delta\bi{\varepsilon}_{\rr{p}l},\Delta\bi{u})) , \notag\\
  & \qquad\qquad\qquad\qquad 
  l=1,\dots,m 
  \label{eq:fixed.point.2}
\end{align}
hold. 
That is, the solution of the elastoplastic incremental problem is 
characterized as a fixed point of the mappings on the right sides of 
\eqref{eq:fixed.point.1} and \eqref{eq:fixed.point.2}. 

A natural fixed-point iteration applied to \eqref{eq:fixed.point.1} is 
\begin{align*}
  \Delta\bi{u}^{(k+1)} 
  := \Delta\bi{u}^{(k)} - \alpha 
  \bi{r}(\Delta\bi{u}^{(k)},\Delta\bi{\varepsilon}_{\rr{p}1}^{(k)},
  \dots,\Delta\bi{\varepsilon}_{\rr{p}m}^{(k)})  . 
\end{align*}
This is exactly same as the computations in 
lines~\ref{alg:proximal.plasticity.elastic}, 
\ref{alg:proximal.plasticity.stress}, 
\ref{alg:proximal.plasticity.residual}, and 
\ref{alg:proximal.plasticity.displacement}
of \refalg{alg:proximal.plasticity}. 

In contrast, we apply a slightly different scheme to \eqref{eq:fixed.point.2}. 
Namely, for each $l=1,\dots,m$ we rewrite \eqref{eq:fixed.point.2} 
equivalently as 
\begin{align}
  \Delta\bi{\varepsilon}_{\rr{p}l} 
  + \beta_{l} \partial\delta_{Y_{l}}^{*}(\Delta{\bi{\varepsilon}}_{\rr{p}l}) 
  \ni \Delta\bi{\varepsilon}_{\rr{p}l} 
  + \beta_{l} \bi{\sigma}_{l}(\Delta\bi{\varepsilon}_{\rr{p}l},\Delta\bi{u}) ,
  \label{eq:normality.proximal.4}
\end{align}
from which we obtain an iteration 
\begin{align}
  \Delta\bi{\varepsilon}_{\rr{p}l}^{(k+1)} 
  + \beta_{l} 
  \partial\delta_{Y_{l}}^{*}(\Delta{\bi{\varepsilon}}_{\rr{p}l}^{(k+1)}) 
  \ni \Delta\bi{\varepsilon}_{\rr{p}l}^{(k)} 
  + \beta_{l} \bi{\sigma}_{l}
  (\Delta\bi{\varepsilon}_{\rr{p}l}^{(k)}, \Delta\bi{u}^{(k)}) . 
  \label{eq:normality.proximal.0}
\end{align}
It follows from \reflmm{lmm:resolvent.interpretation} that 
\eqref{eq:normality.proximal.0} is equivalent to 
\begin{align}
  \Delta\bi{\varepsilon}_{\rr{p}l}^{(k+1)}
  = \prox_{\beta_{l} \delta_{Y_{l}}^{*}}
  (\Delta\bi{\varepsilon}_{\rr{p}l}^{(k)} + \beta_{l} \bi{\sigma}_{l}
  (\Delta\bi{\varepsilon}_{\rr{p}l}^{(k)}, \Delta\bi{u}^{(k)})  )  . 
  \label{eq:normality.proximal.2}
\end{align}
This corresponds to line~\ref{alg:proximal.plasticity.prox} of 
\refalg{alg:proximal.plasticity}. 
Since application of the proximal operator of a closed proper convex 
function to any point always results in a (nonempty) 
unique point \cite[section~1.1]{PB14}, 
the right side of \eqref{eq:normality.proximal.2} is 
guaranteed to be uniquely determined. 

\begin{figure}[tbp]
  \centering
  \includegraphics[scale=0.90]{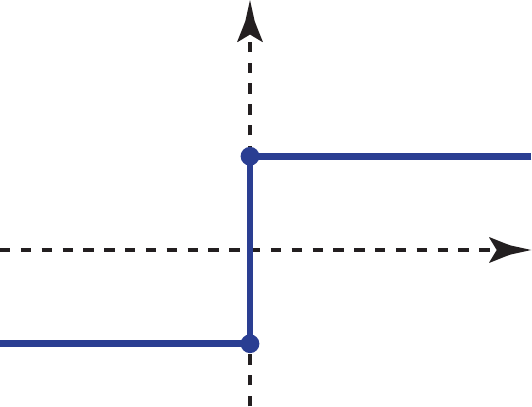}
  \begin{picture}(0,0)
    \put(-150,-50){
    \put(128,80){{\footnotesize $\Delta\varepsilon_{l}$ }}
    \put(63,146){{\footnotesize $z$ }}
    \put(104,120){{\footnotesize $z \in \partial\delta_{Y_{l}}^{*}(\Delta\varepsilon_{l})$ }}
    }
  \end{picture}
  \caption{The associated flow rule of \refex{ex:truss}. }
  \label{fig:stress_function}
\end{figure}

\begin{remark}
  We have seen that the update rule of the plastic strain, 
  \eqref{eq:normality.proximal.2}, of \refalg{alg:proximal.plasticity} 
  can be obtained not from \eqref{eq:fixed.point.2} 
  but from \eqref{eq:normality.proximal.4}.  
  If we adopt \eqref{eq:fixed.point.2}, application of a fixed-point 
  iteration yields 
  \begin{align}
    \Delta\bi{\varepsilon}_{\rr{p}l}^{(k+1)} 
    \in \Delta\bi{\varepsilon}_{\rr{p}l}^{(k)} 
    - \beta_{l} (\partial\delta_{Y_{l}}^{*}
    (\Delta\bi{\varepsilon}_{\rr{p}l}^{(k)}) - \bi{\sigma}_{l}
    (\Delta\bi{\varepsilon}_{\rr{p}l}^{(k)}, \Delta\bi{u}^{(k)}) ) . 
    \label{eq:steepest.plastic.1}
  \end{align}
  However, this is not adequate as an update rule, because 
  $\partial\delta_{Y_{l}}^{*}(\Delta\bi{\varepsilon}_{\rr{p}l}^{(k)})$ 
  on the right side of \eqref{eq:steepest.plastic.1} is, in general, not 
  determined uniquely. 
  For example, in the truss case considered in \refex{ex:truss}, 
  \reffig{fig:stress_function} shows 
  $\partial\delta_{Y_{l}}^{*}(\Delta\varepsilon_{\rr{p}l})$, where 
  $\delta_{Y_{l}}^{*}(\Delta\varepsilon_{\rr{p}l})$ is shown in  
  \reffig{fig:dissipation_function}. 
  It is observed in \reffig{fig:stress_function} that 
  $\partial\delta_{Y_{l}}^{*}(\Delta\varepsilon_{\rr{p}l})$ is not 
  unique at $\Delta\varepsilon_{\rr{p}l}=0$. 
  In contrast, as mentioned above, 
  $\Delta\bi{\varepsilon}_{\rr{p}l}^{(k+1)}$ satisfying 
  \eqref{eq:normality.proximal.0} exists uniquely, which makes 
  \refalg{alg:proximal.plasticity} well-defined. 
  \finbox
\end{remark}

\begin{remark}
  To provide another viewpoint, observe that 
  \eqref{eq:normality.proximal.0} is equivalently written as 
  \begin{align}
    \bi{\sigma}_{l}(\Delta\bi{\varepsilon}_{\rr{p}l}^{(k)}, \Delta\bi{u}^{(k)})
    - \frac{1}{\beta_{l}} 
    (\Delta\bi{\varepsilon}_{\rr{p}l}^{(k+1)} 
    - \Delta\bi{\varepsilon}_{\rr{p}l}^{(k)} )
    \in \partial\delta_{Y_{l}}^{*}(\Delta\bi{\varepsilon}_{\rr{p}l}^{(k+1)}) .
    \label{eq:normality.proximal.6}
  \end{align}
  This is analogous to the associated flow rule 
  in \eqref{eq:equilibrium.4}, 
  but the second term on the left side seems to be additional. 
  One may consider that a natural update rule based on 
  \eqref{eq:equilibrium.4} is 
  \begin{align}
    \bi{\sigma}_{l}(\Delta\bi{\varepsilon}_{\rr{p}l}^{(k)}, \Delta\bi{u}^{(k)}) 
    \in 
    \partial\delta_{Y_{l}}^{*}(\Delta\bi{\varepsilon}_{\rr{p}l}^{(k+1)}) . 
    \label{eq:non.existence.1}
  \end{align}
  However, \eqref{eq:non.existence.1} is not adequate, because for 
  $\bi{\sigma}_{l}(\Delta\bi{\varepsilon}_{\rr{p}l}^{(k)},\Delta\bi{u}^{(k)}) 
  \not\in Y_{l}$ there exists no 
  $\Delta\bi{\varepsilon}_{\rr{p}l}^{(k+1)}$ satisfying 
  \eqref{eq:non.existence.1}; see \reffig{fig:stress_function} for the 
  truss case. 
  In contrast, as mentioned above, for any 
  $\bi{\sigma}_{l}(\Delta\bi{\varepsilon}_{\rr{p}l}^{(k)},\Delta\bi{u}^{(k)})$ 
  and $\Delta\bi{\varepsilon}_{\rr{p}l}^{(k)}$, 
  $\Delta\bi{\varepsilon}_{\rr{p}l}^{(k+1)}$ satisfying 
  \eqref{eq:normality.proximal.6} exists uniquely. 
  \finbox
\end{remark}

\section{Conclusions}
\label{sec:conclusion}

This short paper has presented a unified form of proximal gradient method 
to solve quasi-static incremental problems in elastoplastic analysis of 
structures. 
An interpretation of the presented algorithm has also been provided from 
a viewpoint of mechanics. 
Although in this paper we have restricted ourselves to perfect 
plasticity, extension of the presented results to problems with strain 
hardening is possible; one can refer to \cite{Kan16,SK18,SK20} for 
cases with specific yield criteria. 

The presented general form of the algorithm, as well as interpretation, 
sheds new light on numerical methods in computational plasticity. 
For example, although this paper has been restricted to quasi-static 
incremental problems, it can possibly provide us with a guide for 
development of similar algorithms solving other problems in plasticity, 
including, e.g., limit analysis and shakedown analysis. 
Such algorithms combined with an acceleration scheme may possibly be 
efficient compared with conventional methods in plasticity, because it 
has been reported for quasi-static incremental problems that accelerated 
proximal gradient methods outperform conventional optimization-based 
approaches, especially for large-scale problems \cite{Kan16,SK18,SK20}.

\paragraph{Acknowledgments}

The work described in this paper is partially supported by 
JSPS KAKENHI 17K06633 
and 
JST CREST Grant Number JPMJCR1911, Japan.

\end{document}